\newlength{\defbaselineskip}
\newtheorem{theorem}{Theorem}[section]
\newtheorem{example}{Example}[section]
\newtheorem{lemma}{Lemma}[section]
\newtheorem{remark}{Remark}[section]
\numberwithin{equation}{section}
\begin{document}
\title{Further approximations of Durrmeyer modification of Sz\'asz-Mirakjan operators 
%A study on summation-integral type operators 
%Further approximations of modified Sz$\acute{\text{a}}$sz-Mirakjan-Durrmeyer operators 
%Approximation on Durrmeyer process of generalized Sz$\acute{\text{a}}$sz-Mirakjan operators
%Approximation of bivariate Szász-Mirakjan type operators and study on their associated GBS type operators
}
\maketitle
\begin{center}
{\bf Rishikesh Yadav$^{1,\dag}$,  Ramakanta Meher$^{1,\star}$,  Vishnu Narayan Mishra$^{2,\circledast}$}\\
$^{1}$Applied Mathematics and Humanities Department,
Sardar Vallabhbhai National Institute of Technology Surat, Surat-395 007 (Gujarat), India.\\
$^{2}$Department of Mathematics, Indira Gandhi National Tribal University, Lalpur, Amarkantak-484 887, Anuppur, Madhya Pradesh, India\\
\end{center}
\begin{center}
$^\dag$rishikesh2506@gmail.com,  $^\star$meher\_ramakanta@yahoo.com,
 $^\circledast$vishnunarayanmishra@gmail.com
\end{center}

\vskip0.5in
%\footnotetext[2]{Corresponding author}

\begin{abstract}
The main purpose of this paper is to determine the approximations of Durrmeyer modification of Sz\'asz-Mirakjan operators, defined by Mishra et al. (Boll. Unione Mat. Ital. (2016) 8(4):297-305). We estimate the order of approximation of the operators for the functions belonging to the different spaces.  Here, the rate of convergence of the said operators is established by means of the function with derivative of the bounded variation. At last, the graphical analysis is discussed to support the approximation results of the operators.
\end{abstract}

\subjclass \textbf{MSC 2010}: {41A25, 41A35, 41A36}.

\section{Introduction}
In 1944, Mirakjan \cite{GMM2} and 1950, Sz\'asz \cite{SO} introduced operators on unbounded interval $[0,\infty)$, known as Sz\'asz-Mirakjan operators defined by
\begin{eqnarray}\label{sm1}
S_n(g;x)=\sum\limits_{j=0}^\infty s_{n,j}(x) g\left(\frac{j}{n} \right),
\end{eqnarray}
where $s_{n,j}=e^{-nx}\frac{(nx)^{j}}{j!}$, $g\in C_2[0,\infty)=\{g\in C[0,\infty):\underset{x\to \infty}\lim\frac{f(x)}{1+x^2}\; \text{exists and finite} \}$, $x\geq 0$ and for all $n\in\mathbb{N}$.\\

An integral modification of the above operators (\ref{sm1}) can bee seen in \cite{BPL} to estimate the approximation results for the integrable function. The important properties including global results, local results, simultaneous approximation, convergence properties etc. have been studied with the above operators and their modifications in various studies (see \cite{AU,GDM,MG3,MKMD,MKK,MKM}). One of them, an interesting modification was the Durrmeyer modification of the Sz\'asz-Mirakjan operators and which can can be written as:
  \begin{eqnarray}\label{do}
D_n(g;x)=\sum\limits_{j=0}^\infty s_{n,j}(x) \int\limits_0^{\infty} s_{n,j}(t) g(t) dt,
\end{eqnarray}
 seen in \cite{MS}. Also, another modification into Stancu variant appeared in \cite{RBG} of the above operators (\ref{do}) and related properties like density, direct results as well as Voronovskaya type theorem are studied. 

A natural generalization is carried out for the above operators (\ref{do}) in \cite{MGN} by Mishra et al. for the study of simultaneous approximation, like

%In 2016, Mishra et al. \cite{MGN} carried out the works on approximation properties for the operators defined by:
\begin{eqnarray}\label{rb}
B_n^*(g;x)=u_n\sum\limits_{j=0}^\infty s_{u_n,j}(x)\int\limits_0^{\infty} s_{u_n,j}(t) g(t) dt,
\end{eqnarray}
where $s_{u_n,j}(x)=e^{-u_nx}\frac{(u_nx)^{j}}{j!}$ by considering the sequence $u_n$ is strictly increasing of positive real number as well as $u_n\to\infty$ as $n\to\infty$ with $u_1=1$.

%and they consider $u_n\to\infty$ as $n\to\infty$ with condition $u_1=1$, here functions considered to Lebesgue integrable. 

Our main motive is to study the approximation properties of the proposed operators (\ref{rb}) for the functions belonging from different spaces. 
%By simple calculation, if we take $u_n=n$ then the above operators (\ref{rb}), reduced into  Sz$\acute{\text{a}}$sz-Mirakjan Durremeyr operators defined by Mazhar and Totik \cite{MS}. 
The important properties of the above proposed operators (\ref{rb}) are studied by authors which can also be applied to the operators defined by (\ref{do}).

In order to study the operators (\ref{rb}), we divide the paper into sections. Section second contains preliminaries results, which are used to prove the main theorems. Section third deals with the approximation properties of the operators for the function belong to the different spaces of functions classes. In section fourth, the rate of convergence is estimated of the operators for the functions with derivative of bounded variation.  At last, we present the graphical and numerical representation for the operators in order to show the convergence of the operators.

%Regarding approximation of the function by Durrmeyer type operators, many works have been done in this direction \cite{VG1,VG2, VG3}.  \\
%
%Also the discussion regarding Durrmeyer-type modification of Sz$\acute{\text{a}}$sz-Mirakjan operators is seen in \cite{MS} and the authors gave an important result for the Durrmeyer type operators, which are defined on $[0,\infty)$ as: 
%\begin{equation}\label{mt}
%A_n(f;x)=f(0)s_{n,0}(x)+n\sum\limits_{j=1}^\infty s_{n,j}(x)\int\limits_0^{\infty} s_{n,j-1}(t) f(t) dt,
%\end{equation}
%where $s_{n,0}$ is given by above. 
%All the above operators (\ref{rb}, \ref{mt}) are generalized from the Sz\'asz-Mirakjan operators \cite{MG,SO} define by:
%\begin{eqnarray}
%{SM}_n(f;x)=\sum\limits_{j=0}^\infty s_{n,j}(x)f\left(\frac{j}{n}\right),
%\end{eqnarray}
%where $s_{n,j}=e^{-nx}\frac{(nx)^{j}}{j!}$ is the basis function. And the above operators $\{{SM}_n\}$ were studied by  Sz\'asz \cite{SO}. Also a natural generalization of the Sz\'asz-Mirakjan operators can be seen (presented in \cite{GDM}) in the form of strictly increasing sequence as a simple replacement of $n$ by $u_n$ such that $u_1=1$ and $u_n\to\infty$ as $n\to\infty$. Thus the modified operators are:
%\begin{eqnarray}
%{S}_n^*(f;x)=\sum\limits_{j=0}^\infty s_{u_n,j}(x)f\left(\frac{j}{u_n}\right).
%\end{eqnarray}
%Thus, the works done by Mishra et al. \cite{MGN} on the natural modification of the operators defined in \cite{MS} called as Sz$\acute{\text{a}}$sz-Mirakjan Durremeyr operators put a crucial impact.
\section{Preliminary}
This section contains the basic properties of the defined operators (\ref{rb}). In order to prove approximations properties, we need basic lemmas.
\begin{lemma}
For all $x\geq 0$ and $n\in\mathbb{N}$, we have
\begin{eqnarray*}
B_n^*(1;x)&=&1\\
B_n^*(t;x)&=& \frac{1}{u_n}+x\\
B_n^*(t^2;x)&=& \frac{2+4 x u_n+x^2 u_n^2}{u_n^2}\\
B_n^*(t^3;x)&=&\frac{6+18 x u_n+9 x^2 u_n^2+x^3 u_n^3}{u_n^3}
\end{eqnarray*}
\end{lemma}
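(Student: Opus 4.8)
The plan is to reduce each moment $B_n^*(t^m;x)$, for $m=0,1,2,3$, to a single power series in $x$ by first disposing of the inner integral and then summing the resulting series. First I would evaluate the building block $\int_0^\infty s_{u_n,j}(t)\,t^m\,dt$. Writing $s_{u_n,j}(t)=e^{-u_n t}u_n^{\,j}t^{j}/j!$ and using the Gamma integral $\int_0^\infty e^{-u_n t}t^{\,j+m}\,dt=(j+m)!/u_n^{\,j+m+1}$, this collapses to $(j+m)!/(j!\,u_n^{\,m+1})$. Substituting into (\ref{rb}) and cancelling one factor of $u_n$ gives
\[
B_n^*(t^m;x)=\frac{1}{u_n^{\,m}}\sum_{j=0}^\infty \frac{(j+m)!}{j!}\,s_{u_n,j}(x),
\]
so the whole problem is now the evaluation of the single series on the right for each $m$.

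For that series I would set $\lambda=u_n x$ and observe that $\frac{(j+m)!}{j!}$ is exactly the differentiation coefficient $\frac{d^m}{d\lambda^m}\lambda^{\,j+m}=\frac{(j+m)!}{j!}\lambda^{\,j}$. Hence $\sum_{j\ge 0}\frac{(j+m)!}{j!}\frac{\lambda^{\,j}}{j!}=\frac{d^m}{d\lambda^m}\bigl(\lambda^m e^{\lambda}\bigr)$, and restoring the factor $e^{-\lambda}$ present in $s_{u_n,j}(x)$ yields $\sum_{j}\frac{(j+m)!}{j!}s_{u_n,j}(x)=e^{-\lambda}\frac{d^m}{d\lambda^m}(\lambda^m e^{\lambda})$. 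Differentiating $\lambda^m e^{\lambda}$ by the Leibniz rule and multiplying by $e^{-\lambda}$ produces a polynomial in $\lambda$: namely $1$ for $m=0$, $1+\lambda$ for $m=1$, $2+4\lambda+\lambda^2$ for $m=2$, and $6+18\lambda+9\lambda^2+\lambda^3$ for $m=3$. Equivalently one could invoke the classical power moments of the Poisson law, $\sum_j j^k s_{u_n,j}(x)=1,\lambda,\lambda^2+\lambda,\lambda^3+3\lambda^2+\lambda$, and expand the rising factorial $(j+1)(j+2)\cdots(j+m)$.

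Finally I would divide each polynomial by $u_n^{\,m}$ and substitute $\lambda=u_n x$ to read off the four stated formulas; the case $m=0$ simultaneously recovers the normalization $B_n^*(1;x)=1$ and confirms the operators are well defined. The only genuine obstacle is justifying the two interchanges of limit operations involved, that is, swapping summation with integration when evaluating the building block and termwise differentiation of the power series. Since every term is nonnegative and the Poisson series converges absolutely and uniformly on compact sets, Tonelli's theorem together with the analyticity of $\lambda^m e^{\lambda}$ makes both steps routine, and the remaining work is elementary polynomial differentiation.
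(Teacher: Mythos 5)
Your proposal is correct: the Gamma-integral evaluation $\int_0^\infty s_{u_n,j}(t)\,t^m\,dt=(j+m)!/(j!\,u_n^{m+1})$, the reduction to $e^{-\lambda}\frac{d^m}{d\lambda^m}(\lambda^m e^{\lambda})$ with $\lambda=u_n x$, and the resulting polynomials $1$, $1+\lambda$, $2+4\lambda+\lambda^2$, $6+18\lambda+9\lambda^2+\lambda^3$ all check out and reproduce exactly the four stated moments. The paper itself omits the proof entirely (declaring it easy), so there is no argument to compare against; your computation is the standard one that the authors implicitly have in mind, and it fills the gap completely, including the justification of interchanging sum and integral.
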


\begin{proof}
We can easily proof the above parts of the lemma, so we omit the proof.
\end{proof}

\begin{lemma}\label{lem1}
Consider the function  $g$ is integrable, continuous, bounded on given interval $[0,\infty)$, then the central moments can be obtained as:
\begin{eqnarray}
\Omega_{n,m}=u_n\sum\limits_{j=0}^\infty s_{u_n,j}(x)\int\limits_0^{\infty} s_{u_n,j}(t) (t-x)^m dt,
\end{eqnarray}
where $m=0,1,2,\ldots$. So for $m=0,1$, we get the the central moments as follows:
\begin{eqnarray}
\Omega_{n,0}=1, \Omega_{n,1}=\frac{1}{u_n},
\end{eqnarray}
in general, we have
\begin{eqnarray}
u_n\Omega_{n,m+1}=x\left(\Omega_{n,m}'+2m\Omega_{n,m-1}+(1+m)\Omega_{n,m} \right),
\end{eqnarray}
this lead us to 
\begin{eqnarray}
\Omega_{n,m}=O\left(u_n^{-\left[\frac{m+1}{2}\right]} \right).
\end{eqnarray}
\end{lemma}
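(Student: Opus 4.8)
The plan is to establish the central moments via a recurrence relation derived from differentiating the generating structure of the operators, and then extract the asymptotic order by induction on $m$.

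First I would verify the base cases $\Omega_{n,0}=1$ and $\Omega_{n,1}=\frac{1}{u_n}$ directly from the first lemma: since $\Omega_{n,0}=B_n^*(1;x)=1$ and $\Omega_{n,1}=B_n^*(t;x)-x=\frac{1}{u_n}$, these follow immediately from linearity of $B_n^*$ and the computed moments. These serve both as the start of the induction and as a consistency check on the recurrence.

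The key step is deriving the recurrence $u_n\Omega_{n,m+1}=x\left(\Omega_{n,m}'+2m\Omega_{n,m-1}+(1+m)\Omega_{n,m}\right)$. My approach is to differentiate $\Omega_{n,m}$ with respect to $x$ under the sum and integral. Writing $\Omega_{n,m}=u_n\sum_{j=0}^\infty s_{u_n,j}(x)\int_0^\infty s_{u_n,j}(t)(t-x)^m\,dt$, I would use the identity $s_{u_n,j}'(x)=u_n\left(\frac{j}{u_n x}-1\right)s_{u_n,j}(x)$, equivalently $x\,s_{u_n,j}'(x)=(j-u_n x)s_{u_n,j}(x)$, together with the analogous relation in $t$, namely $t\,s_{u_n,j}'(t)=(j-u_n t)s_{u_n,j}(t)$. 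Differentiating $\Omega_{n,m}$ produces a term $-m\,\Omega_{n,m-1}$ from the $(t-x)^m$ factor plus a term involving $s_{u_n,j}'(x)$; I would then multiply through by $x$, replace $(j-u_n x)$ using the $t$-identity after an integration by parts that converts the $t$-derivative of $s_{u_n,j}(t)$ into derivatives of the polynomial weight, and rearrange the resulting combination of $(t-x)^{m-1}$, $(t-x)^m$, and $(t-x)^{m+1}$ terms into the stated form.

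The main obstacle I anticipate is handling the boundary terms and the algebraic bookkeeping in the integration by parts over $[0,\infty)$: one must confirm that the boundary contributions vanish (using the exponential decay of $s_{u_n,j}(t)$ at infinity and the vanishing factor at $t=0$) and that the polynomial identity $(j-u_n t)=(j-u_n x)-u_n(t-x)$ is used to shift all exponents consistently. Once the recurrence is in hand, the order estimate $\Omega_{n,m}=O\left(u_n^{-[(m+1)/2]}\right)$ follows by strong induction: assuming $\Omega_{n,m}=O(u_n^{-[(m+1)/2]})$ and $\Omega_{n,m-1}=O(u_n^{-[m/2]})$, I would check that each of the three terms on the right-hand side, after division by $u_n$, is $O(u_n^{-[(m+2)/2]})$, noting that differentiation in $x$ does not increase the order in $u_n$ and that the floor function exponents advance correctly with the parity of $m$. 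The only subtlety is tracking how $[(m+1)/2]$ increments differ between even and odd $m$, which the recurrence's coupling of $\Omega_{n,m}$ with $\Omega_{n,m-1}$ accommodates precisely.
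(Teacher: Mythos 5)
First, a point of reference: the paper offers no proof of this lemma at all --- it is stated bare (it is a known result of Mazhar--Totik type, cf.\ \cite{MS}) --- so there is no proof of record to compare you against, and a correct argument here genuinely fills a hole. Your plan is the standard and essentially correct one: the base cases follow from the first lemma exactly as you say; the recurrence comes from differentiating under the sum and integral using $x\,s_{u_n,j}'(x)=(j-u_nx)s_{u_n,j}(x)$, splitting $j-u_nx=(j-u_nt)+u_n(t-x)$, replacing $(j-u_nt)s_{u_n,j}(t)=t\,s_{u_n,j}'(t)$, writing $t=(t-x)+x$, and integrating by parts. The boundary terms you worried about arise only for $j=0$ (since $s_{u_n,j}(0)=0$ for $j\geq 1$) and cancel exactly in the combination you set up: $-(-x)^{m+1}-x(-x)^m=0$. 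Your strong induction for the order estimate also goes through, since $1+\left[\frac{m}{2}\right]=\left[\frac{m+2}{2}\right]$, provided you strengthen the hypothesis to say that $\Omega_{n,m}$ is a polynomial in $x$ each of whose coefficients is $O\left(u_n^{-\left[\frac{m+1}{2}\right]}\right)$, which is what justifies your claim that $\partial_x$ does not degrade the order in $u_n$.

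There is, however, one concrete failure: you claim you would ``rearrange the resulting combination into the stated form,'' and that is impossible, because the recurrence as printed in the lemma is incorrect. Carrying out your derivation faithfully gives
\begin{equation*}
u_n\Omega_{n,m+1}=x\,\Omega_{n,m}'+2mx\,\Omega_{n,m-1}+(m+1)\,\Omega_{n,m},
\end{equation*}
i.e.\ the factor $x$ must not multiply the term $(1+m)\Omega_{n,m}$. The printed version fails the very consistency check you yourself proposed: at $m=0$ it gives $u_n\Omega_{n,1}=x\left(\Omega_{n,0}'+\Omega_{n,0}\right)=x$, contradicting $\Omega_{n,1}=\frac{1}{u_n}$; at $m=1$, using $\Omega_{n,1}=\frac{1}{u_n}$ and the paper's own Remark $\Omega_{n,2}=\frac{2(1+u_nx)}{u_n^2}$, the printed right-hand side equals $2x+\frac{2x}{u_n}$ while the left-hand side equals $2x+\frac{2}{u_n}$. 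The corrected recurrence passes both checks, and the final conclusion $\Omega_{n,m}=O\left(u_n^{-\left[\frac{m+1}{2}\right]}\right)$ is unaffected, since the misplaced factor of $x$ is harmless for $x$ in a fixed compact set. So your method is sound and yields a valid proof of a corrected statement, but as written your proposal asserts it will terminate at a formula that is false; had you actually run your base-case check against the recurrence rather than only against the moments, you would have caught this.
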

\begin{lemma}
Let the function $g$ be the continuous and bounded on $[0,\infty)$ endowed with supremum norm $\|g(x)\|=\underset{x\geq 0}\sup |g|$ then, we have 
\begin{eqnarray}
|B_n^*(g;x)|\leq \|g\|.
\end{eqnarray}
\end{lemma}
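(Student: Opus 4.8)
The plan is to exploit the positivity and linearity of the operator together with the normalization $B_n^*(1;x)=1$ already recorded in Lemma~2.1. The essential observation is that for $x\ge 0$ each weight $s_{u_n,j}(x)=e^{-u_nx}(u_nx)^j/j!$ is nonnegative, and the integrand $s_{u_n,j}(t)$ is likewise nonnegative on $[0,\infty)$; hence $B_n^*$ maps nonnegative functions to nonnegative functions and the supremum bound should fall out by a direct majorization of $g$ by its norm.

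Concretely, I would first apply the triangle inequality to pass the absolute value through the sum and the integral, obtaining
\begin{eqnarray*}
|B_n^*(g;x)|\le u_n\sum_{j=0}^\infty s_{u_n,j}(x)\int_0^\infty s_{u_n,j}(t)\,|g(t)|\,dt.
\end{eqnarray*}
Next I would use the pointwise estimate $|g(t)|\le\|g\|$, valid for every $t\ge 0$ by definition of the supremum norm, to factor the constant $\|g\|$ outside both the integral and the series. What remains is precisely
\begin{eqnarray*}
|B_n^*(g;x)|\le \|g\|\cdot u_n\sum_{j=0}^\infty s_{u_n,j}(x)\int_0^\infty s_{u_n,j}(t)\,dt=\|g\|\,B_n^*(1;x),
\end{eqnarray*}
and invoking $B_n^*(1;x)=1$ from Lemma~2.1 closes the argument.

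The only genuinely technical point is justifying the interchange of the absolute value with the infinite summation and the improper integral; since $g$ is bounded, the dominating series $\|g\|\sum_j s_{u_n,j}(x)\int_0^\infty s_{u_n,j}(t)\,dt$ converges (it sums to $\|g\|$), so absolute convergence holds and Tonelli's theorem legitimizes every exchange. Beyond this routine justification I anticipate no obstacle: the result is a standard consequence of $B_n^*$ being a positive linear operator reproducing constants, and the estimate is in fact sharp, attained by any constant function.
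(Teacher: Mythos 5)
Your proof is correct and is exactly the standard argument: positivity of the kernel, the pointwise bound $|g(t)|\leq\|g\|$, and the normalization $B_n^*(1;x)=1$ (equivalently, $\int_0^\infty s_{u_n,j}(t)\,dt=1/u_n$ and $\sum_j s_{u_n,j}(x)=1$). The paper itself states this lemma without any proof, evidently regarding it as an immediate consequence of $B_n^*$ being a positive linear operator reproducing constants, so your write-up (including the Tonelli justification for the interchanges) simply supplies the routine details the authors omitted.
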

\begin{remark}
For second order central moment, it can be written as
%\begin{eqnarray}
%\Omega_{n,2}=\frac{2 (1+u_n x)}{u_n^2}=\frac{2}{u_n}\left(x+\frac{1}{u_n} \right)=\frac{2}{u_n}\zeta_n(x)~(\text{say}).
%\end{eqnarray}
\begin{eqnarray}\label{rem}
\Omega_{n,2}=\frac{2 (1+u_n x)}{u_n^2}=\frac{2}{u_n}\left(x+\frac{1}{u_n} \right)= \frac{2}{u_n}\zeta_n^2(x),
\end{eqnarray}
where $\zeta_n^2(x)=\left(x +\frac{1}{u_n}\right).$
\end{remark}
\section{Approximation properties}
%%%%%%%%%%%%%%%%%%%%%%%%%%%%%%%%%%%%%%%%%%%%%%%%%%%%%%%%%%%%%%%%%%%%%%%%
Consider $C_B[0,\infty)$ be the space of all continuous and bounded function defined on $[0,\infty)$, endowed with supremum norm $\|g\|=\underset{x\geq 0}\sup |g(x)|$, also let for any $\delta>0$ 
\begin{eqnarray}
K_2(g;\delta)=\underset{f\in E}\inf\{\|g-f\|+\delta\|f''\|\}
\end{eqnarray}
be the Peetre's $K$-functional, where $E=\{f\in C_B[0,\infty): f', f''\in C_B[0,\infty) \}$. Also a relation can be seen  for which there exists a positive constant $M$ such that:
\begin{eqnarray}\label{ine1}
K_2(g;\delta)\leq M \omega_2 (g,\sqrt{\delta}),~~\delta>0,
\end{eqnarray}
where $\omega_2 (g,\sqrt{\delta})$ is second order modulus of smoothness the function $g\in C_B[0,\infty)$, which is defined by:
\begin{eqnarray}
\omega_2(g,\delta)=\sup\{g(x+h)-2g(x)+g(x-h):x,x\pm h\in [0,\infty), 0\leq h\leq \delta\},
\end{eqnarray}
also usual modulus of continuity can be defined for the function $g\in C_B[0,\infty)$ as follows:
\begin{eqnarray}
\omega(g,\delta)=\{g(y)-g(x):x,y\in [0,\infty), |y-x|\leq\delta, \delta>0\}. 
\end{eqnarray}
\begin{theorem}
Consider $g\in C_B[0,\infty)$ and for all $x\geq0$ then there exists a positive constant $M$ such that
\begin{eqnarray}
|B_n^*(g;x)-g(x)|\leq C\omega_2\left(g,\frac{\sqrt{\delta_n}}{2}\right)+\omega\left(g,\gamma_n\right),
\end{eqnarray}
where $\delta_n=\tilde{B}_{n}^*((t-x)^2;x)+\frac{1}{u_n^2}$ and $\gamma_n=\tilde{B}_{n}^*((t-x);x).$
\end{theorem}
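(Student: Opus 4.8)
The plan is to run the standard Peetre $K$-functional argument, but applied to an auxiliary operator that reproduces linear functions so that the troublesome first-order Taylor term is annihilated. Since the first lemma gives $B_n^*(t;x)=x+\tfrac1{u_n}$, the operator $B_n^*$ itself does not fix $t$, so first I would introduce the modified operator
\[
\tilde B_n^*(g;x)=B_n^*(g;x)+g(x)-g\!\Bigl(x+\tfrac1{u_n}\Bigr).
\]
A direct check using the moment formulas shows $\tilde B_n^*(1;x)=1$ and $\tilde B_n^*((t-x);x)=0$, i.e.\ $\tilde B_n^*$ preserves linear functions, while from the contraction estimate $|B_n^*(h;x)|\le\|h\|$ one gets $|\tilde B_n^*(h;x)|\le 3\|h\|$ for every $h\in C_B[0,\infty)$. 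I would also record that $\tilde B_n^*((t-x)^2;x)=\Omega_{n,2}-\tfrac1{u_n^2}$, so that $\delta_n=\tilde B_n^*((t-x)^2;x)+\tfrac1{u_n^2}$ is exactly the genuine second moment $\Omega_{n,2}$ of (\ref{rem}).

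Next, for $f\in E$ (so $f,f',f''\in C_B$) I would expand by Taylor's formula with integral remainder,
\[
f(t)=f(x)+(t-x)f'(x)+\int_x^t (t-u)f''(u)\,du,
\]
and apply $\tilde B_n^*$ in the variable $t$. Because $\tilde B_n^*$ fixes constants and linear functions, the first two terms collapse to $f(x)$, leaving
\[
\tilde B_n^*(f;x)-f(x)=\tilde B_n^*\!\left(\int_x^t (t-u)f''(u)\,du;\,x\right).
\]
Bounding the remainder by $\bigl|\int_x^t(t-u)f''(u)\,du\bigr|\le \tfrac12\|f''\|(t-x)^2$ and using the explicit form of $\tilde B_n^*$ then yields an estimate of the shape $|\tilde B_n^*(f;x)-f(x)|\le C'\,\delta_n\,\|f''\|$.

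With this in hand I would pass to a general $g\in C_B$ by the usual three-term split: for any $f\in E$,
\[
|\tilde B_n^*(g;x)-g(x)|\le |\tilde B_n^*(g-f;x)|+|(g-f)(x)|+|\tilde B_n^*(f;x)-f(x)|\le C''\bigl(\|g-f\|+\delta_n\|f''\|\bigr).
\]
Taking the infimum over $f\in E$ produces $C''K_2(g;\delta_n)$, and the equivalence (\ref{ine1}) converts this into a constant times $\omega_2(g,\sqrt{\delta_n})$; the half-factor in $\omega_2(g,\sqrt{\delta_n}/2)$ is produced by carrying the $\tfrac12$ from the Taylor remainder through the linear normalization of $K_2$.

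Finally I would return from $\tilde B_n^*$ to $B_n^*$. From the definition of the modified operator,
\[
B_n^*(g;x)-g(x)=\bigl[\tilde B_n^*(g;x)-g(x)\bigr]+\Bigl[g\!\Bigl(x+\tfrac1{u_n}\Bigr)-g(x)\Bigr],
\]
and the second bracket is controlled by the first-order modulus, $\bigl|g(x+\tfrac1{u_n})-g(x)\bigr|\le\omega(g,\tfrac1{u_n})=\omega(g,\gamma_n)$, where $\gamma_n=B_n^*((t-x);x)=\tfrac1{u_n}$ is the first moment of the original operator. Combining the two brackets gives the claimed inequality. The only genuinely delicate step is the remainder estimate in the second paragraph: one must treat the extra evaluation term $g(x+\tfrac1{u_n})$ carried by $\tilde B_n^*$ carefully and track constants so that the second-moment quantity that emerges is precisely $\delta_n$. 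Everything else is routine bookkeeping built on the moment lemmas and (\ref{rem}).
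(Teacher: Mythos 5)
Your proposal is correct and follows essentially the same route as the paper: the same auxiliary operator $\tilde B_n^*(g;x)=B_n^*(g;x)+g(x)-g\left(x+\tfrac1{u_n}\right)$, the same Taylor expansion with integral remainder for $f\in E$, the same three-term split leading to $K_2(g;\delta_n/4)$ and hence $C\omega_2\left(g,\tfrac{\sqrt{\delta_n}}{2}\right)$, and the same treatment of the shift term by $\omega(g,\gamma_n)$ with $\gamma_n=\tfrac1{u_n}$. Your reading of $\gamma_n$ as the first moment of $B_n^*$ (rather than of $\tilde B_n^*$, which vanishes) correctly repairs a notational slip in the paper's statement, and your observation that $\delta_n=\Omega_{n,2}$ is a small sharpening the paper does not make explicit.
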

\begin{proof}
Here, we consider the auxiliary operators such that
\begin{eqnarray}\label{NO2}
{\tilde{S}}_{n}^*(g;x)=B_n^*(g;x)-g\left(\frac{1}{u_n}+x\right) +g(x).
\end{eqnarray}
Let $f\in E$, $x\geq 0$ the using Taylor's formula, we have
\begin{eqnarray}
f(t)-f(x)=(t-x)f'(x)+\int\limits_0^t(t-v)f''(v)dv
\end{eqnarray}
Applying the operators $\tilde{B}_{n}^*$ on the both sides to the above expression, it yields:
\begin{eqnarray}\label{e1}
\tilde{B}_{n}^*(f;x)-f(x)&=&f'(x)\tilde{B}_{n}^*(t-x;x)+\tilde{B}_{n}^*\left(\int\limits_x^t(t-v)f''(v)dv \right)\nonumber\\
&=& \tilde{B}_{n}^*\left(\int\limits_x^t(t-v)f''(v)dv \right)\nonumber\\
&=& S_{n}^*\left(\int\limits_x^t(t-v)f''(v)dv \right)-\left(\int\limits_x^{\left(\frac{1}{u_n}+x\right)}\left(\frac{1}{u_n}+x -v\right)f''(v) dv \right).
\end{eqnarray}
Here, the inequalities are as:
\begin{eqnarray}\label{i1}
\left|\int\limits_x^t(t-v)f''(v)dv \right|\leq (t-x)^2\|f''\|
\end{eqnarray}
and 
\begin{eqnarray}\label{i2}
\left|\int\limits_x^{\left(\frac{1}{u_n}+x\right)}\left(\frac{1}{u_n}+x -v\right)f''(v) dv \right|\leq \frac{1}{u_n^2} \|f''\|.
\end{eqnarray}
By considering the above inequalities (\ref{i1}, \ref{i2}) and with (\ref{e1}), we obtain
\begin{eqnarray}\label{e2}
\tilde{B}_{n}^*(f;x)-f(x)&=& \left\{\tilde{B}_{n}^*((t-x)^2;x)+\frac{1}{u_n^2} \right\}\|f''\|\\
&=& \delta_n \|f''\|.
\end{eqnarray}
Also, $|S_{n}^*(g;x)|\leq \|g\|$. Using this property, we get

\begin{eqnarray*}
|S_{n}^*(g;x))-g(x)| & \leq & |\tilde{B}_{n}^*(g-f;x)-(g-f)(x)|+|\tilde{B}_{n}^*(f;x)-f(x)| \\ &&+ \left|g\left(\frac{1}{u_n}+x\right)-g(x)\right| \\ & \leq & 4\|g-f\| +| \tilde{B}_{n}^*(f;x)-f(x)|+ \left|g\left(\frac{1}{u_n}+x\right)-g(x)\right|,
\end{eqnarray*}
using  (\ref{e2}) and with the help of modulus of continuity, we obtain
\begin{eqnarray*}
|S_{n}^*(g;x)-g(x)| & \leq & 4\|g-f\| +\delta_n\|f''\|+ \omega\left(g,\gamma_n\right).
\end{eqnarray*}
Taking the infimum for all $f\in E$ on the right hand side and by relation (\ref{ine1}), we get

\begin{eqnarray*}
|S_{n}^*(g;x)-g(x)| & \leq & 4K_2\left(g;\frac{1}{4}\delta_n\right)+ \omega\left(g,\gamma_n\right)\\ & \leq & C\omega_2\left(g,\frac{\sqrt{\delta_n}}{2}\right)+\omega\left(g,\gamma_n\right).
\end{eqnarray*}
Thus, the proof is completed.
\end{proof}

%%%%%%%%%%%%%%%%%%%%%%%%%%%%%%%%%%%%%%%%%%%%%%%%%%%%%%%%%%%%%%%%%%%%%%%%%%%%%%%%%%%%%%%%%%%%%%%%%%%%%%%%%%%%%%%%%%%%%%%%%%%%%%%%%%%%%%%%%%%%%%%%

Now, we estimate the approximation of the defined operators (\ref{rb}), by new type of Lipschitz maximal function with order $s\in(0,1]$,  defined by Lenze \cite{LB} as
\begin{eqnarray}\label{eq8}
\tau_s(g,x)=\underset{x,t\geq 0}\sup \frac{|g(t)-g(x)|}{|t-x|^s},~~t\neq x. 
\end{eqnarray}
Using Lipschitz maximal function definition, we have following theorem.
\begin{theorem}
For any $g\in C_B[0,\infty)$ with $s\in(0,1]$ then one can obtain
\begin{eqnarray*}
\left|B_n^*(g;x)-g(x)\right| &\leq  \tau_s(g,x)\left(\Omega_{n,2}\right)^{\frac{s}{2}}.
\end{eqnarray*}
\end{theorem}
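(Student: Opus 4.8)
The plan is to lean on three facts: that $B_n^*$ is a positive linear operator reproducing constants (first lemma of Section~2), the defining inequality of the Lipschitz-type maximal function $\tau_s$, and H\"older's inequality. Since $B_n^*(1;x)=1$, I would first rewrite the error as $B_n^*(g;x)-g(x)=B_n^*\big(g(t)-g(x);x\big)$ and move the absolute value inside the positive kernel to get
\[
\left|B_n^*(g;x)-g(x)\right|\le u_n\sum_{j=0}^{\infty}s_{u_n,j}(x)\int_0^{\infty}s_{u_n,j}(t)\,|g(t)-g(x)|\,dt .
\]
The definition (\ref{eq8}) then supplies the pointwise estimate $|g(t)-g(x)|\le \tau_s(g,x)\,|t-x|^s$ (trivially true at $t=x$), which I substitute to factor out the constant $\tau_s(g,x)$, leaving
\[
\left|B_n^*(g;x)-g(x)\right|\le \tau_s(g,x)\,B_n^*\big(|t-x|^s;x\big).
\]

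The main step is to control $B_n^*(|t-x|^s;x)$ by the second central moment $\Omega_{n,2}$. Because $s\in(0,1]$, I apply H\"older's inequality with the conjugate exponents $p=2/s$ and $q=2/(2-s)$, noting that $B_n^*$ integrates against a probability measure (its total mass is $B_n^*(1;x)=1$). This yields
\[
B_n^*\big(|t-x|^s;x\big)\le \Big(B_n^*\big(|t-x|^2;x\big)\Big)^{s/2}\big(B_n^*(1;x)\big)^{(2-s)/2}=\big(\Omega_{n,2}\big)^{s/2},
\]
since the last factor equals one. Combining the displays gives exactly $\left|B_n^*(g;x)-g(x)\right|\le \tau_s(g,x)\,(\Omega_{n,2})^{s/2}$.

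I expect the only delicate point to be this H\"older step: one must check that $p$ and $q$ are genuinely conjugate for every $s\in(0,1]$ and that the reproduction of constants collapses the spurious factor to one. Everything else is routine substitution, and $\Omega_{n,2}$ may be replaced by its explicit value $2(1+u_nx)/u_n^2$ from (\ref{rem}) if an explicit rate in terms of $u_n$ is wanted.
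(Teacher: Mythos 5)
Your proposal is correct and follows essentially the same route as the paper: bound $|g(t)-g(x)|$ by $\tau_s(g,x)|t-x|^s$ via the definition (\ref{eq8}), then apply H\"older's inequality with conjugate exponents $2/s$ and $2/(2-s)$, using $B_n^*(1;x)=1$ to reduce to $(\Omega_{n,2})^{s/2}$. In fact your write-up is cleaner than the paper's, since you make explicit the positivity of the operator, the reproduction of constants that collapses the second H\"older factor, and the conjugacy check $s/2+(2-s)/2=1$, all of which the paper leaves implicit.
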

\begin{proof}
By equation (\ref{eq8}), we can write
\begin{eqnarray*}
\left|B_n^*(g;x)-g(x)\right| &\leq \tau_s(g,x)B_n^*(|t-x|^s;x).
\end{eqnarray*}
Using, H$\ddot{\text{o}}$lder's inequality with $j=\frac{2}{s}$, $l=\frac{2}{2-s}$, one can get
\begin{eqnarray*}
\left|B_n^*(g;x)-g(x)\right| &\leq & \tau_s(g,x)\left(B_n^*(g;x)((t-x)^2;x)\right)^{\frac{s}{2}}=\tau_s(f,x)\left(\Omega_{n,2}\right)^{\frac{s}{2}}.
\end{eqnarray*}
\end{proof}

Next theorem is based on modified Lipschitz type spaces \cite{OMA} and this spaces is defined by 
\begin{eqnarray*}
Lip_M^{m_1,m_2}(s)=\Bigg\{ g\in C_B[0,\infty):|g(l_1)-g(l_2)|\leq M\frac{|l_1-l_2|^s}{\left(l_1+l_2^2m_1+l_2 m_2\right)^{\frac{s}{2}}},~~\text{where}~l_1, l_2\geq0 ~\text{are~variables},~s\in(0,1] \Bigg\}
\end{eqnarray*}
and $m_1, m_2$ are the fixed numbers and $M>0$ is a constant.
\begin{theorem}
For $g\in Lip_M^{m_1,m_2}(s)$ and $0<s\leq 1$, an inequality holds:
\begin{eqnarray*}
\left|B_n^*(g;x)-g(x)\right| &\leq & M\left(\frac{\Omega_{n,2}}{x(xm_1+m_2)}\right)^{\frac{s}{2}},~~M>0, x\in[0,\infty).
\end{eqnarray*}
\end{theorem}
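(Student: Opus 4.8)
The plan is to exploit the positivity and linearity of $B_n^*$ together with the normalization $B_n^*(1;x)=1$, following the same template as the preceding Lipschitz-maximal theorem. First I would use that $B_n^*$ reproduces constants and is a positive linear operator to write
$$|B_n^*(g;x)-g(x)| = |B_n^*(g(t)-g(x);x)| \leq B_n^*(|g(t)-g(x)|;x).$$

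Next I would insert the defining inequality of $Lip_M^{m_1,m_2}(s)$ with the choice $l_1=t$ and $l_2=x$, which gives
$$|g(t)-g(x)| \leq M\,\frac{|t-x|^s}{\left(t+x^2 m_1+x m_2\right)^{s/2}}.$$
The crucial observation is that the integration variable $t$ enters the denominator additively and nonnegatively, so it may be discarded to obtain the uniform lower bound $t+x^2 m_1+x m_2 \geq x(x m_1+m_2)$. This renders the denominator independent of $t$, so that the resulting constant factor can be pulled outside the operator:
$$B_n^*(|g(t)-g(x)|;x) \leq \frac{M}{\left(x(x m_1+m_2)\right)^{s/2}}\,B_n^*(|t-x|^s;x).$$

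It then remains to bound the fractional moment $B_n^*(|t-x|^s;x)$. I would apply H\"older's inequality with the conjugate exponents $j=\frac{2}{s}$ and $l=\frac{2}{2-s}$, exactly as in the previous theorem, to obtain
$$B_n^*(|t-x|^s;x) \leq \left(B_n^*((t-x)^2;x)\right)^{s/2}\left(B_n^*(1;x)\right)^{(2-s)/2} = \left(\Omega_{n,2}\right)^{s/2},$$
where the last equality uses $B_n^*(1;x)=1$ and the identification $B_n^*((t-x)^2;x)=\Omega_{n,2}$ supplied by Lemma \ref{lem1} and equation (\ref{rem}). Combining the two preceding displays yields precisely $|B_n^*(g;x)-g(x)| \leq M\left(\frac{\Omega_{n,2}}{x(x m_1+m_2)}\right)^{s/2}$, completing the argument.

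The single step requiring genuine care is the lower bound on the denominator: one must verify that the term dropped is indeed nonnegative for all $t\geq 0$ in the support of the operator, which is what makes the estimate independent of $t$ and permits extracting it from $B_n^*$. Everything else is the routine combination of positivity of the operator with H\"older's inequality, and I do not anticipate any further obstacle.
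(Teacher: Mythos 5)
Your proof is correct and follows essentially the same route as the paper: positivity of $B_n^*$ plus the Lipschitz condition, the lower bound $t+x^2m_1+xm_2\geq x(xm_1+m_2)$ for $t\geq 0$, and H\"older's inequality with exponents $\frac{2}{s}$ and $\frac{2}{2-s}$, finishing with the identification $B_n^*((t-x)^2;x)=\Omega_{n,2}$. The only difference is organizational: by inserting the Lipschitz bound \emph{before} applying H\"older you handle $s=1$ and $s\in(0,1)$ in one uniform argument, whereas the paper splits into these two cases (using the Cauchy--Schwarz case of H\"older when $s=1$).
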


\begin{proof}
We have $s\in(0,1]$ and in order to prove the above theorem,  we discuss the cases on $s$.\\

%To prove above theorem, we can distribute its proof into two part by considering case discussion. So here:\\
\textbf{Case 1.} if we consider $s=1$ then for all $t,x\geq 0$, we can observe that  $\frac{1}{t+x^2m_1+xm_2)}\leq \frac{1}{x(xm_1+m_2)}$ then 
\begin{eqnarray*}
\left|B_n^*(g;x)-g(x)\right| &\leq & B_n^*(|g(t)-g(x)|;x)\\
&\leq & M B_n^*\left(\frac{|t-x|}{\left(t+x^2m_1+xm_2\right)^{\frac{1}{2}}};x\right)\\
&\leq & \frac{M}{\left(x(xm_1+m_2)\right)^{\frac{1}{2}}}B_n^*(|t-x|;x) \\
&\leq & \frac{M}{\left(x(xa_1+a_2)\right)^{\frac{1}{2}}}\left(\Omega_{n,2}\right)^{\frac{1}{2}}\\
&\leq & M\left(\frac{\Omega_{n,2}}{x(xm_1+m_2)}\right)^{\frac{1}{2}}.
\end{eqnarray*}
\textbf{Case 2.} for $s\in (0,1)$ then using  H$\ddot{\text{o}}$lder inequality with $p=\frac{2}{s}, q=\frac{2}{2-s}$, we get
\begin{eqnarray*}
\left|B_n^*(g;x)-g(x)\right| &\leq & \left(B_n^*(|g(t)-g(x)|^{\frac{2}{s}};x)\right)^{\frac{s}{2}}\leq M B_n^*\left(\frac{|t-x|^{2}}{\left(t+x^2m_1+xm_2\right)};x\right)^{\frac{s}{2}}\\
&\leq & M B_n^*\left(\frac{|t-x|^{2}}{\left(x(xm_1+m_2)\right)};x\right)^{\frac{s}{2}}\\
&\leq & M\left(\frac{\Omega_{n,2}}{x(xm_1+m_2)}\right)^{\frac{s}{2}}.
\end{eqnarray*}
This complete the proof.
\end{proof}

\begin{theorem}\label{th1}
For the function $g$ which is continuous and bounded on $[0,\infty)$, the convergence of the operators can be obtained as:
\begin{eqnarray}
\underset{n\to\infty}\lim B_n^*(g;x)=g(x), 
\end{eqnarray}
uniformly on any compact interval of $[0,\infty)$.
\end{theorem}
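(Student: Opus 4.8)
The plan is to invoke the classical Korovkin theorem, since $B_n^*$ is manifestly a sequence of positive linear operators: each is built from the nonnegative Sz\'asz basis functions $s_{u_n,j}$ and a nonnegative integral kernel, so $B_n^*(g;x)\geq 0$ whenever $g\geq 0$, while linearity is immediate from the definition (\ref{rb}). By Korovkin's theorem it therefore suffices to verify that $B_n^*(t^i;x)\to x^i$ uniformly on every compact subinterval of $[0,\infty)$ for $i=0,1,2$.

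First I would read off the required limits directly from Lemma 2.1. Since $B_n^*(1;x)=1$ for all $n$, the case $i=0$ is trivial. For $i=1$ the lemma gives $B_n^*(t;x)=x+\frac{1}{u_n}$, whence $|B_n^*(t;x)-x|=\frac{1}{u_n}\to 0$ uniformly in $x$ because $u_n\to\infty$. For $i=2$ the lemma yields $B_n^*(t^2;x)=x^2+\frac{4x}{u_n}+\frac{2}{u_n^2}$, so on a fixed compact interval $[0,a]$ we have $|B_n^*(t^2;x)-x^2|\leq\frac{4a}{u_n}+\frac{2}{u_n^2}\to 0$ uniformly. With the three Korovkin conditions established (and with the convergence uniform on $[0,a]$), the theorem immediately gives $B_n^*(g;x)\to g(x)$ uniformly on $[0,a]$ for every $g\in C_B[0,\infty)$; since $a$ is arbitrary this is exactly the asserted convergence on compact subintervals.

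The only point requiring a little care is that Korovkin's theorem is usually phrased for operators on $C[0,a]$, whereas our operators act on functions defined on all of $[0,\infty)$; but since the conclusion is claimed only on compact subintervals and the test-function estimates above are uniform there, no genuine difficulty arises. Alternatively, one could bypass Korovkin and argue quantitatively from the first theorem of this section: as $n\to\infty$ the quantities $\delta_n$ and $\gamma_n$ are controlled by the central moments, which by Lemma \ref{lem1} (or directly by the expression (\ref{rem}) for $\Omega_{n,2}$) are $O(u_n^{-1})$ on compact sets, so both $\omega_2\!\left(g,\frac{\sqrt{\delta_n}}{2}\right)$ and $\omega(g,\gamma_n)$ tend to zero. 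The main (and essentially only) obstacle in either route is confirming that the moment estimates are uniform in $x$ over the compact interval, which is clear because every $x$-dependent term is multiplied by a negative power of $u_n$.
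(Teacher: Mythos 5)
Your proposal follows essentially the same route as the paper: invoke the (Bohman--)Korovkin theorem and verify the three test functions $1$, $t$, $t^2$ using the moments from Lemma 2.1. Your version is in fact more careful than the paper's, since you make the uniformity on a compact interval $[0,a]$ explicit via the bounds $\frac{1}{u_n}$ and $\frac{4a}{u_n}+\frac{2}{u_n^2}$, whereas the paper merely states the limits and cites Korovkin.
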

\begin{proof}
Using Bohman-Korovkin theorem, we can get our required result. Since $\underset{n\to\infty} \lim B_n^*(1;x)\to1$, $\underset{n\to\infty} \lim B_n^*(t;x)\to x$, $\underset{n\to\infty} \lim B_n^*(t^2;x)\to x^2$ and hence the proposed operators $B_n^*(g;x)$  converge uniformly to the function $g(x)$ on any compact interval of $[0,\infty)$. 
\end{proof}

\section{Rate of convergence by means of the function with derivative of bounded variation}
This section consists the rate of convergence by means of the function with derivative of bounded variation. Let $DBV[0,\infty)$ be the set of all class of function having derivative of bounded variation on every compact interval of $[0,\infty)$. The following representation for the function $g\in DBV[0,\infty)$, is as follows:
\begin{eqnarray}
g(x)=\int\limits_0^x h(t)dt+g(0),
\end{eqnarray}
where $h(t)$ is a function with derivative of bounded variation on any compact interval of $[0,\infty)$. 

For investigation of the convergence of the above operators (\ref{rb}) to the function with derivative of bounded variation, we rewrite (\ref{rb}) as follows:
\begin{eqnarray}\label{new}
B_n^*(g;x)=\int_0^\infty Y_n(x,t)g(t)dt,
\end{eqnarray}
 where 
 \begin{eqnarray*}
 Y_n(x,t)=u_n\sum\limits_{j=0}^\infty s_{u_n,j}(x)\ s_{u_n,j}(t).
 \end{eqnarray*}
Such type of properties have been studied by researchers using various operators  (see\cite{IN1,KH,Y1,Y2,Y3}.)
\begin{lemma}\label{lem2}
For sufficiently large value of $n$ and for all $x\geq 0$, we have 
\begin{enumerate}
\item{} $I_n(x,t)=\int\limits_0^y  Y_n(x,t)dt\leq \frac{2}{(x-y)^2u_n}\zeta_n^2(x),~~~0\leq y< x,$
\item{} $1-I_n(x,t)=  \int\limits_z^\infty  Y_n(x,t)dt \leq \frac{2}{(z-x)^2u_n}\zeta_n^2(x),~~~x\leq z<\infty.$
\end{enumerate}
%\begin{eqnarray}
%I_n(x,t)&=&\int\limits_0^t  Y_n(x,u)du \\
%1-I_n(x,t)&=&  \int\limits_t^\infty  Y_n(x,u)du 
%\end{eqnarray} 
\end{lemma}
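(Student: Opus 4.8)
The plan is to read these two estimates as Chebyshev-type tail bounds for the probability density $Y_n(x,\cdot)$. Indeed, the kernel is nonnegative and, since $B_n^*(1;x)=1$, it satisfies $\int_0^\infty Y_n(x,t)\,dt=1$; moreover the weighted integral against $(t-x)^2$ is exactly the second central moment recorded in the Remark,
\begin{eqnarray*}
\int_0^\infty Y_n(x,t)(t-x)^2\,dt=\Omega_{n,2}=\frac{2}{u_n}\zeta_n^2(x).
\end{eqnarray*}
With this single identity in hand, both bounds follow from the standard weighting trick: on each tail the factor $(t-x)^2/(\text{distance})^2$ is at least $1$, so inserting it cannot decrease the integral, and then enlarging the domain to all of $[0,\infty)$ produces the full second moment.

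For part (1), I would fix $0\le y<x$ and observe that for every $t\in[0,y]$ we have $t\le y<x$, hence $x-t\ge x-y>0$ and therefore $\left(\frac{x-t}{x-y}\right)^2\ge 1$. Since $Y_n(x,t)\ge 0$, this gives
\begin{eqnarray*}
I_n(x,y)=\int_0^y Y_n(x,t)\,dt\le \int_0^y Y_n(x,t)\left(\frac{x-t}{x-y}\right)^2 dt\le \frac{1}{(x-y)^2}\int_0^\infty Y_n(x,t)(t-x)^2\,dt,
\end{eqnarray*}
and substituting the value of the last integral yields $I_n(x,y)\le \frac{2}{(x-y)^2u_n}\zeta_n^2(x)$, which is the asserted estimate.

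Part (2) is entirely symmetric. For $x\le z<\infty$ and $t\in[z,\infty)$ one has $t-x\ge z-x>0$, so $\left(\frac{t-x}{z-x}\right)^2\ge 1$, and the same insertion-and-extension step gives
\begin{eqnarray*}
1-I_n(x,z)=\int_z^\infty Y_n(x,t)\,dt\le \frac{1}{(z-x)^2}\int_0^\infty Y_n(x,t)(t-x)^2\,dt=\frac{2}{(z-x)^2u_n}\zeta_n^2(x),
\end{eqnarray*}
where the identification of the left-hand tail with $1-I_n(x,z)$ uses the normalization $\int_z^\infty Y_n(x,t)\,dt=1-\int_0^z Y_n(x,t)\,dt$ coming from $B_n^*(1;x)=1$.

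I do not expect any genuine obstacle: the entire quantitative content, namely the $O(u_n^{-1})$ decay encoded in $\zeta_n^2(x)$, is imported wholesale from the second-moment computation \eqref{rem}, and what remains is the elementary majorization above. The only point meriting a word of care is the boundary behaviour as $y\to x^-$ (respectively $z\to x^+$), where the right-hand sides blow up; this is harmless because the statement is made under the strict separations $y<x$ and $x<z$, so the inequalities are nontrivial precisely away from the singular point and vacuously valid in the limit.
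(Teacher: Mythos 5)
Your proof is correct and takes essentially the same route as the paper: both arguments insert the Chebyshev-type factor $\left(\frac{x-t}{x-y}\right)^2\geq 1$ on the tail, extend the integration domain to $[0,\infty)$, and invoke the second central moment identity $\Omega_{n,2}=\frac{2}{u_n}\zeta_n^2(x)$ from the Remark. Your write-up is in fact slightly more careful than the paper's, which compresses the domain-extension step into an equality sign and dispatches part (2) with ``similarly.''
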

\begin{proof}
Using the Lemma \ref{lem1} and since the value of $n$ is sufficiently large, so we have
\begin{eqnarray*}
I_n(x,t)&=&\int\limits_0^y  Y_n(x,t)dt \leq \int\limits_0^y \left(\frac{(x-t)^2}{(x-y)^2} \right) Y_n(x,t)dt\\
&=& \frac{2}{(x-y)^2u_n}\zeta_n^2(x).
\end{eqnarray*}
 Similarly, we can prove other inequality.
 \end{proof}

\begin{theorem}
Let $g\in DBV[0,\infty)$, then for all $x\geq0$, an upper bound of the operators to the function can be as:
\begin{eqnarray*}
|B_n^*(g;x)-g(x)|&\leq &  \frac{1}{2u_n}|g'(x+)+g'(x-)|+ \sqrt{\frac{1}{2u_n}}|g'(x+)-g'(x-)|\zeta_n(x)\\
&& +\frac{2\zeta_n^2(x)}{xu_n} \sum\limits_{j=0}^{[\sqrt{u_n}]}\left(V_{x-\frac{x}{j}}^tg'_x \right)+\frac{x}{\sqrt{u_n}}\left(V_{x-\frac{x}{\sqrt{u_n}}}^xg'_x \right)\\
 && + \frac{x}{\sqrt{u_n}} V_x^{x+\frac{x}{\sqrt{u_n}}}(g'_x)+ \frac{2\zeta_n^2(x)}{xu_n} \sum\limits_{j=0}^{[\sqrt{u_n}]} V_x^{x+\frac{x}{j}}(g'_x),
\end{eqnarray*}
where 
\begin{eqnarray}\label{eq5}
g_x(t) &= &  
\begin{cases}
    g(t)-g(x-),& 0\leq t<x,\\
    0,& t=x,\\
    g(t)-g(x+), &  x<t<\infty
\end{cases} 
\end{eqnarray}
be an auxiliary operator and $ V_a^b g(x)$ denotes the total variation of the function $g(x)$ on $[a,b]$.
\end{theorem}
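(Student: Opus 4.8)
The plan is to follow the standard Bojanic--Vuilleumier scheme for operators acting on functions with derivative of bounded variation. Since $B_n^*(1;x)=1$, I would first write
\[
B_n^*(g;x)-g(x)=\int_0^\infty Y_n(x,t)\big(g(t)-g(x)\big)\,dt=\int_0^\infty Y_n(x,t)\Big(\int_x^t g'(u)\,du\Big)\,dt,
\]
and then decompose the derivative about the point $x$ as
\[
g'(u)=g'_x(u)+\frac{g'(x+)+g'(x-)}{2}+\frac{g'(x+)-g'(x-)}{2}\operatorname{sgn}(u-x),
\]
where $g'_x$ is formed from $g'$ exactly as $g_x$ is formed from $g$ in (\ref{eq5}); the jump term supported on the single point $\{x\}$ contributes nothing under integration. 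Inserting this identity into $\int_x^t g'(u)\,du$ and applying $B_n^*$ linearly splits the error into three pieces.

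The two elementary pieces are evaluated at once. The constant part gives $\frac{g'(x+)+g'(x-)}{2}\,B_n^*(t-x;x)=\frac{g'(x+)+g'(x-)}{2}\cdot\frac{1}{u_n}$, which is the first term of the bound. The $\operatorname{sgn}$ part gives $\frac{g'(x+)-g'(x-)}{2}\,B_n^*(|t-x|;x)$; applying the Cauchy--Schwarz inequality together with the identity $\Omega_{n,2}=\frac{2}{u_n}\zeta_n^2(x)$ from (\ref{rem}) yields $B_n^*(|t-x|;x)\le\sqrt{\Omega_{n,2}}=\sqrt{\tfrac{2}{u_n}}\,\zeta_n(x)$, which produces the second term.

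The essential piece is $R_n(x):=\int_0^\infty Y_n(x,t)\big(\int_x^t g'_x(u)\,du\big)\,dt$, which I would split at $t=x$ and integrate by parts using the cumulative kernels $I_n(x,t)=\int_0^t Y_n(x,u)\,du$ and $1-I_n(x,t)=\int_t^\infty Y_n(x,u)\,du$. The boundary terms vanish because $\int_x^x g'_x=0$, $I_n(x,0)=0$ and $1-I_n(x,\infty)=0$, reducing the estimate to
\[
R_n(x)=-\int_0^x g'_x(t)\,I_n(x,t)\,dt+\int_x^\infty g'_x(t)\,\big(1-I_n(x,t)\big)\,dt.
\]
For the first integral I would split the range at $x-\tfrac{x}{\sqrt{u_n}}$: on the near strip $[x-\tfrac{x}{\sqrt{u_n}},x)$ I use $I_n\le 1$ and $|g'_x(t)|=|g'(t)-g'(x-)|\le V_{x-x/\sqrt{u_n}}^x(g'_x)$, giving $\tfrac{x}{\sqrt{u_n}}V_{x-x/\sqrt{u_n}}^x(g'_x)$; on the far part $[0,x-\tfrac{x}{\sqrt{u_n}})$ I invoke the bound $I_n(x,t)\le\frac{2\zeta_n^2(x)}{(x-t)^2u_n}$ from Lemma \ref{lem2}.

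The main obstacle is converting the resulting far-field integral into the stated finite sum. Here I would use $|g'_x(t)|\le V_t^x(g'_x)$ and the substitution $t=x-\tfrac{x}{u}$ (so $x-t=\tfrac{x}{u}$, $dt=\tfrac{x}{u^2}\,du$), which converts $\int_0^{x-x/\sqrt{u_n}}\frac{V_t^x(g'_x)}{(x-t)^2}\,dt$ into $\frac{1}{x}\int_1^{\sqrt{u_n}}V_{x-x/u}^x(g'_x)\,du$; since $u\mapsto V_{x-x/u}^x(g'_x)$ is nonincreasing, bounding each unit subinterval by its left endpoint gives the Riemann sum $\frac{1}{x}\sum_{j=1}^{[\sqrt{u_n}]}V_{x-x/j}^x(g'_x)$, and multiplying by $\frac{2\zeta_n^2(x)}{u_n}$ produces the third term. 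An entirely symmetric treatment of $\int_x^\infty g'_x(t)\big(1-I_n(x,t)\big)\,dt$, splitting at $x+\tfrac{x}{\sqrt{u_n}}$, using $1-I_n(x,t)\le\frac{2\zeta_n^2(x)}{(t-x)^2u_n}$ and the substitution $t=x+\tfrac{x}{u}$, yields the last two terms. Collecting the six contributions completes the proof.
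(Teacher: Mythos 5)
Your proposal is correct and follows essentially the same route as the paper's own proof: the same Bojanic--Vuilleumier decomposition of $g'$ into mean, jump, and $g'_x$ parts, the same evaluation of the first two pieces via $\Omega_{n,1}$ and Cauchy--Schwarz with $\Omega_{n,2}=\frac{2}{u_n}\zeta_n^2(x)$, and the same treatment of the main term by splitting at $x$, integrating by parts against $I_n$ and $1-I_n$, splitting at $x\pm\frac{x}{\sqrt{u_n}}$, invoking Lemma \ref{lem2}, and using the substitutions $t=x\mp\frac{x}{u}$ to reach the variation sums. In fact your write-up is tidier than the paper's (you correctly put $g'(x+)-g'(x-)$ in the sign term and index the Riemann sums from $j=1$, where the paper has typographical slips), but there is no substantive difference in method.
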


\begin{proof}
Since, $B_n^*(1;x)=1$ and hence, one can write
\begin{eqnarray*}
B_n^*(g;x)-g(x)&=&\int_0^\infty (g(t)-g(x))Y_n(x,t)dt\\
&=&  \int_0^\infty Y_n(x,t)dt \int\limits_x^t g'(u)du.
\end{eqnarray*}
\end{proof}
Now, for $g\in DBV[0,\infty)$, we can write as:
\begin{eqnarray*}
g'(u)=\frac{1}{2}(g'(x+)+g'(x-))+g_x'(u)+\frac{1}{2}(g'(x+)+g'(x-))\left(sgn(u-x) \right)+\eta(u)\left(g'(u)-\frac{1}{2}(g'(x+)+g'(x-)) \right),
\end{eqnarray*}

where 
\begin{eqnarray}\label{bv2}
\eta(u)=
\begin{cases}
1 & u=x\\
0 & u\neq x.
\end{cases}
\end{eqnarray}
And then, one can show
\begin{eqnarray}\label{bv4}
\int\limits_0^\infty Y_n(x,t)\int\limits_x^t\left(\eta(u)\{g'(u)-\frac{1}{2}(g'(x+)+g'(x-))\}du \right)dt=0.
\end{eqnarray}
Using (\ref{new}), we can get

\begin{eqnarray}
\int\limits_0^\infty Y_n(x,t)\left(\int\limits_x^s\frac{1}{2}(g'(x+)+g'(x-))~du\right)dt &=&\frac{1}{2}(g'(x+)+g'(x-)) \int\limits_0^\infty Y_n(x,t)(t-x)~dt\nonumber\\&=& \frac{1}{2}(g'(x+)+g'(x-)) \Omega_{n,1}.
\end{eqnarray}

And 

\begin{eqnarray}\label{bv6}
\left|\int\limits_0^\infty Y_n(x,t)\left(\frac{1}{2}\int\limits_x^t(g'(x+)-g'(x-))\text{sgn}(u-x)~du\right)dt\right| &\leq & \frac{1}{2}|(g'(x+)-g'(x-))|\int\limits_0^\infty Y_n(x,t)|t-x|~dt\nonumber\\
&\leq & \frac{1}{2}|(g'(x+)-g'(x-))| \int\limits_0^\infty |t-x|Y_n(x,t) dt\nonumber\\
&\leq & \frac{1}{2}|(g'(x+)-g'(x-)|\left(\Omega_{n,2} \right)^{\frac{1}{2}}
\end{eqnarray}

Using (\ref{rem}), we get:

\begin{eqnarray}\label{bv7}
|B_n^*(g;x)-g(x)|&\leq &  \frac{1}{2}|g'(x+)+g'(x-)|\Omega_{n,1}+ \frac{1}{2}|g'(x+)-g'(x-)|\sqrt{\frac{2}{u_n}}\zeta_n(x)\nonumber\\
&&+ \left|\int\limits_0^\infty Y_n(x,t)\left(\frac{1}{2}\int\limits_x^s(g'_x(u))~du\right)dt\right|
\end{eqnarray}

Here,
 
   \begin{eqnarray}\label{bv8}
   \nonumber \int\limits_0^\infty Y_n(x,t)\left(\int\limits_x^s(g'_x(u))~du\right)dt &=& \int\limits_0^x Y_n(x,t)\left(\int\limits_x^s(g'_x(u))~du\right)dt+\int\limits_x^\infty Y_n(x,t)(x,t)\left(\int\limits_x^t(g'_x(u))~du\right)dt\\
   &=&P_1+P_2,
   \end{eqnarray}

where 

\begin{eqnarray}
P_1 &=& \int\limits_0^x \left(\int\limits_x^t(g'_x(u))~du\right)\frac{\partial}{\partial t}(I_n(x,t))dt\nonumber\\
&=& \int\limits_0^x g'_x(t)I_n(x,t)dt\nonumber\\
&=& \int\limits_0^y g'_x(t)I_n(x,t)dt+\int\limits_y^x g'_x(t)I_n(x,t)dt
\end{eqnarray}

Here, we consider $y=x-\frac{x}{\sqrt{u_n}}$ then by above equality, one can write

\begin{eqnarray}
\left|\int\limits_{x-\frac{x}{\sqrt{u_n}}}^x g'_x(t)I_n(x,t)dt \right|&\leq & \int\limits_{x-\frac{x}{\sqrt{u_n}}}^x |g'_x(t)||I_n(x,t)|dt\nonumber\\
&\leq & \int\limits_{x-\frac{x}{\sqrt{u_n}}}^x |g'_x(t)-g'_x(x)|dt,~~~~g'_x(x)~=0, (\text{where}~|I_n(x,t)|\leq 1)\nonumber\\
&\leq & \int\limits_{x-\frac{x}{\sqrt{u_n}}}^x V_t^xg'_x dt\nonumber\\&\leq & V_{x-\frac{x}{\sqrt{u_n}}}^xg'_x\int\limits_{x-\frac{x}{\sqrt{u_n}}}^x dt\nonumber\\
&=&\frac{x}{\sqrt{u_n}}\left(V_{x-\frac{x}{\sqrt{u_n}}}^xg'_x \right)
%&\leq & V_{x-\frac{x}{\sqrt{n}}}^xf'_x\int\limits_{x-\frac{x}{\sqrt{n}}}^x ds\nonumber\\
%&=& \frac{x}{\sqrt{n}}\left(V_{x-\frac{x}{\sqrt{n}}}^xf'_x \right).
\end{eqnarray}

Using  Lemma \ref{lem2} for solving second term by substituting $t=x-\frac{x}{u}$,  we get

\begin{eqnarray}
\int\limits_x^{x-\frac{x}{\sqrt{u_n}}} |g'_x(t)|I_n(x,t)dt &\leq & \frac{2\zeta_n^2(x)}{u_n} \int\limits_x^{x-\frac{x}{\sqrt{u_n}}} \frac{|g'_x(t)|}{(x-t)^2} dt\nonumber\\
&\leq & \frac{2\zeta_n^2(x)}{u_n} \int\limits_x^{x-\frac{x}{\sqrt{u_n}}} V_t^xg'_x\frac{1}{(x-t)^2}dt\nonumber\\
&=& \frac{2\zeta_n^2(x)}{xu_n} \int\limits_x^{\sqrt{u_n}} V_{x-\frac{x}{u}}^sg'_x du\nonumber\\
&\leq & \frac{2\zeta_n^2(x)}{xu_n} \sum\limits_{j=0}^{[\sqrt{u_n}]}\left(V_{x-\frac{x}{j}}^tg'_x \right).
\end{eqnarray}

Hence, 

\begin{eqnarray}
|P_1|\leq \frac{2\zeta_n^2(x)}{xu_n} \sum\limits_{j=0}^{[\sqrt{u_n}]}\left(V_{x-\frac{x}{j}}^tg'_x \right)+\frac{x}{\sqrt{u_n}}\left(V_{x-\frac{x}{\sqrt{u_n}}}^xg'_x \right).
\end{eqnarray}

To solve $P_2$, we reform $P_2$ and integrating by parts, we have

\begin{eqnarray*}
|P_2|&=&\Bigg| \int\limits_x^z\left(\int\limits_x^tg'_x(u)du \right)\frac{\partial}{\partial t}(1-I_n(x,t))dt + \int\limits_z^\infty\left(\int\limits_x^tg'_x(u)du \right)\frac{\partial}{\partial t}(1-I_n(x,t))dt\Bigg| \\
&\leq & \left|\int\limits_x^z\left(\int\limits_x^tg'_x(u)du \right)\frac{\partial}{\partial t}(1-I_n(x,t))dt  \right| + \left| \int\limits_z^\infty\left(\int\limits_x^tg'_x(u)du \right)\frac{\partial}{\partial t}(1-I_n(x,t))dt     \right|\\
&=& \Bigg|  \left[\int\limits_x^tg'_x(u)du (1-I_n(x,t))\right]_x^z-\int\limits_x^z g'_x(t)(1-I_n(x,t))dt \\
&& +  \left[\int\limits_x^tg'_x(u)du (1-I_n(x,t)) \right]_z^\infty - \int\limits_z^\infty g'_x(t)(1-I_n(x,t))  dt\Bigg| \\
&=& \Bigg| \int\limits_x^z g'_x(u)du (1-I_n(x,z)) -\int\limits_x^z g'_x(t)(1-I_n(x,t))dt \\
&&  -\int\limits_x^z g'_x(u)du (1-I_n(x,z))-\int\limits_z^\infty g'_x(t)(1-I_n(x,t))  dt \Bigg|\\
&=& \Bigg| \int\limits_x^z g'_x(t)(1-I_n(x,t))dt +\int\limits_z^\infty g'_x(t)(1-I_n(x,t))  dt\Bigg|\\
&\leq &  \int\limits_x^z V_x^t (g'_x) dt+ \frac{2\zeta_n^2(x)}{u_n}\int\limits_z^\infty V_x^t(g'_x) \frac{1}{(t-x)^2}dt\\
&\leq & \frac{x}{\sqrt{u_n}} V_x^{x+\frac{x}{\sqrt{u_n}}}(g'_x)+\frac{2\zeta_n^2(x)}{u_n}\int\limits_{x+\frac{x}{\sqrt{u_n}}}^\infty V_x^t(g'_x) \frac{1}{(t-x)^2}dt.
\end{eqnarray*}

On substituting $t=x\left(1+\frac{1}{\beta} \right)$, we obtain

\begin{eqnarray*}
|P_2|&\leq & \frac{x}{\sqrt{u_n}} V_x^{x+\frac{x}{\sqrt{u_n}}}(g'_x)+\frac{2\zeta_n^2(x)}{xu_n}\int\limits_{0}^{\sqrt{u_n}} V_x^{x+\frac{x}{\beta}}(g'_x) d\beta\\
&\leq & \frac{x}{\sqrt{u_n}} V_x^{x+\frac{x}{\sqrt{u_n}}}(g'_x)+ \frac{2\zeta_n^2(x)}{xu_n} \sum\limits_{j=0}^{[\sqrt{u_n}]}\int\limits_{j}^{\sqrt{j+1}}V_x^{x+\frac{x}{j}}(g'_x) d\beta\\
&=& \frac{x}{\sqrt{u_n}} V_x^{x+\frac{x}{\sqrt{u_n}}}(g'_x)+ \frac{2\zeta_n^2(x)}{xu_n} \sum\limits_{j=0}^{[\sqrt{u_n}]} V_x^{x+\frac{x}{j}}(g'_x).
\end{eqnarray*}

Using the value of $P_1, P_2$ in (\ref{bv8}), we obtain 
\begin{eqnarray}\label{bv9}
 \nonumber \int\limits_0^\infty Y_n(x,t)\left(\int\limits_x^s(g'_x(u))~du\right)dt &=& \frac{2\zeta_n^2(x)}{xu_n} \sum\limits_{j=0}^{[\sqrt{u_n}]}\left(V_{x-\frac{x}{j}}^tg'_x \right)+\frac{x}{\sqrt{u_n}}\left(V_{x-\frac{x}{\sqrt{u_n}}}^xg'_x \right)\\
 && + \frac{x}{\sqrt{u_n}} V_x^{x+\frac{x}{\sqrt{u_n}}}(g'_x)+ \frac{2\zeta_n^2(x)}{xu_n} \sum\limits_{j=0}^{[\sqrt{u_n}]} V_x^{x+\frac{x}{j}}(g'_x)
\end{eqnarray}
Put the above value from (\ref{bv9}) in (\ref{bv7}), we obtain required result

\begin{eqnarray*}
|B_n^*(g;x)-g(x)|&\leq &  \frac{1}{2u_n}|g'(x+)+g'(x-)|+ \sqrt{\frac{1}{2u_n}}|g'(x+)-g'(x-)|\zeta_n(x)\\
&& +\frac{2\zeta_n^2(x)}{xu_n} \sum\limits_{j=0}^{[\sqrt{u_n}]}\left(V_{x-\frac{x}{j}}^tg'_x \right)+\frac{x}{\sqrt{u_n}}\left(V_{x-\frac{x}{\sqrt{u_n}}}^xg'_x \right)\\
 && + \frac{x}{\sqrt{u_n}} V_x^{x+\frac{x}{\sqrt{u_n}}}(g'_x)+ \frac{2\zeta_n^2(x)}{xu_n} \sum\limits_{j=0}^{[\sqrt{u_n}]} V_x^{x+\frac{x}{j}}(g'_x).
\end{eqnarray*}

%\pagebreak
\section{Graphical and numerical analysis of the operators}
In this section, we study the graphical representation  and numerical analysis of the operators to the function.
\begin{example}
Let the function $g:[0,2.5]\to [0,\infty)$ such that $g(x)=-x^3e^{-5x}$(blue) for all $x\in [0,2.5].$ Choosing $u_n=n=15, 35, 50$ and then corresponding operators are $S_{15}^*(g;x), S_{35}^*(g;x), S_{50}^*(g;x)$ represent  green, red and black colors respectively in the given Figure \ref{F1}. One can observe that as the value of $n$ is increased, the error of the operators to the function is going to be least. We can say that the approach of the operators to the function is good for the large value of $n$.\\

But for the same function, if we move towards the truncation type error, we can observe by Figure \ref{F2}, the approximation is not better throughout the interval $[0,2.5]$. Here we consider the $u_n=n=15, 35, 50$ and $j=15, 35, 50$, using these values, the truncation is determined. So one can observe that at a some stage, its going good but not at all.

%For the approximation of the operators defined by (\ref{rb}) to the given function, here, we consider the function $g:[0,4]\to [0,\infty)$ such that $g(x)=e^x$(blue), for all $x\in[0,4].$ Taking $n=25,50,100$ and corresponding operators are as $S_{25}^*$(green), $S_{50}^*$(red), $S_{100}^*$(black). Here convergence can be seen by observing through given Figure \ref{F1} that when the value of $n$ is increased, the approximation is going to good.
\begin{figure}[h!]
    \centering 
    \includegraphics[width=.52\textwidth]{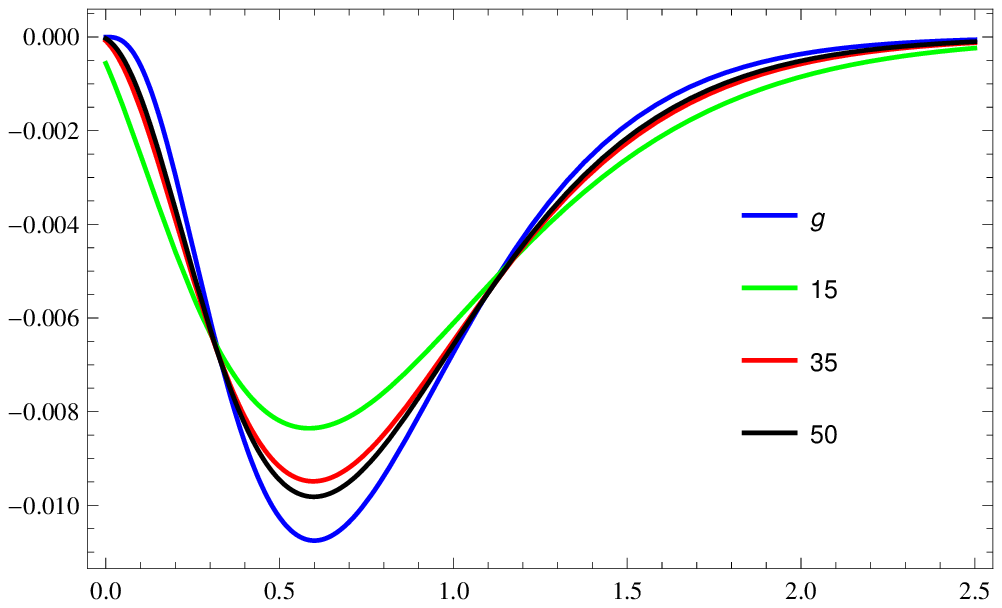}   
    \caption[Description in LOF, taken from~\cite{source}]{The convergence of the operators $S_{n}^*(g;x)$ to the function $g(x)(blue)$.}
    \label{F1}
\end{figure}
\begin{figure}[h!]
    \centering 
    \includegraphics[width=.52\textwidth]{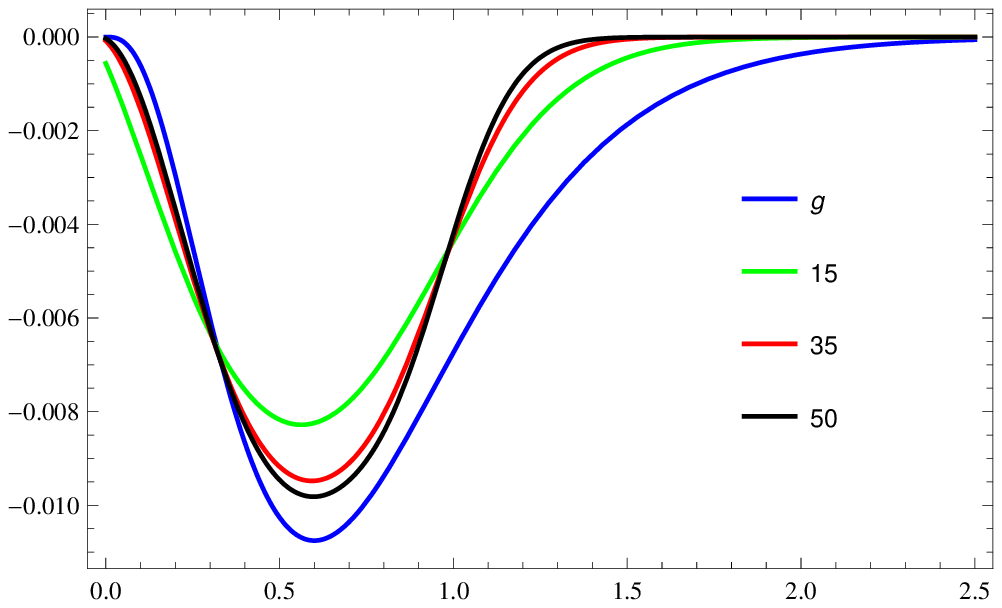}   
    \caption[Description in LOF, taken from~\cite{source}]{The convergence of the operators $S_{n}^*(g;x)$ to the function $g(x)(blue)$.}
    \label{F2}
\end{figure}
\end{example}
\pagebreak
Now, we determine the convergence of the operators to the function by considering the different sequences for the operators and then we see that the variation of the convergence to the function is changed.
%\pagebreak

\begin{example}
Let the function $g(x)=x^2e^{2x}$(black), for all $0\leq x\leq 2.5$. Here, we consider $u_n=n$ and choosing the value of $n=10, 50, 100, 200, 250, 500, 1000$, for which the operators's curve is red for the all values of $n$. Then, we can observe the error estimations by Figure \ref{F3} as well Table \ref{t1} at different points of $x$, which is going to be better as the value of $n$ is increased.

\begin{figure}[h!]
    \centering 
    \includegraphics[width=.42\textwidth]{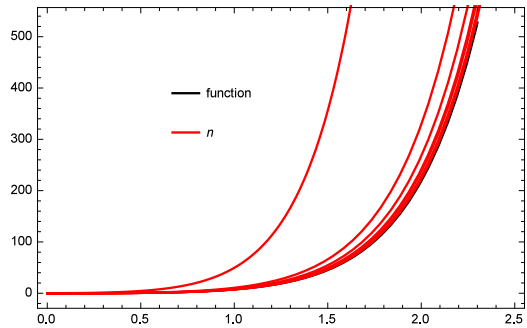}   
    \caption[Description in LOF, taken from~\cite{source}]{The convergence of the operators $S_{n}^*(g;x)$ to the function $g(x)$.}
    \label{F3}
\end{figure}
%\pagebreak
\begin{table}[ht]
\centering
\begin{tabular}{|c|c|c|c|c|c|c|c|}
\hline 
$x\downarrow$, $u_n=n\to$ & at n=10 & at n=50 & at n=100 & at n=200 & at n=250 & at n=500 & at n=1000\\
\hline 
0.1 & 0.202522 & 0.0156053 & 0.0069326 & 0.00326665 & 0.00258244 & 0.00126086 & 0.000622967 \\ 
\hline 
0.5 & 3.82396 & 0.325365 & 0.148479 & 0.0710035 & 0.0563036 & 0.0276615 & 0.0137104 \\ 
\hline 
0.9 & 27.2622 & 2.13631 & 0.969982 & 0.462837 & 0.366865 & 0.180094 & 0.0892291 \\ 
\hline 
1.0 & 42.1618 & 3.22439 & 1.46137 & 0.696735 & 0.552174 & 0.270979 & 0.134238 \\ 
\hline 
1.5 & 310.724 & 20.8491 & 9.3538 & 4.43876 & 3.51461 & 1.72172 & 0.852162 \\ 
\hline 
2.0 & 1888.96 & 110.236 & 48.9145 & 23.0939 & 18.2677 & 8.93151 & 4.4164\\ 
\hline 
2.5 & 10237.6 & 516.742 & 226.689 & 106.464 & 84.1292 & 41.0503 & 20.2783\\ 
\hline 
\end{tabular} 
\caption{Convergence estimations of the operators $B_n^*(g;x)$ to the function $g(x)$}\label{t1}
\end{table}
\end{example}
\pagebreak
\begin{example}
Let for the same function $g(x)=x^2e^{2x}$(black), for all $0\leq x\leq 2.5$. Here, we consider $u_n=n^{\frac{3}{2}}$ and choosing the value of $n=10, 50, 100, 200, 250, 500, 1000$, the curves of the operators (\ref{rb}) represent green color for all values of $n^{\frac{3}{2}}$ for the operators (\ref{rb}). Hence, we can observe the error estimations by Figure \ref{F4} as well Table \ref{t2} at the different points of $x$.
\begin{figure}[h!]
    \centering 
    \includegraphics[width=.42\textwidth]{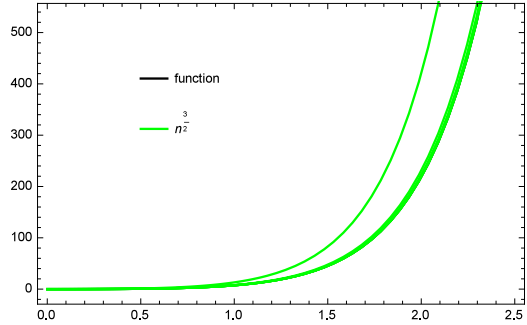}   
    \caption[Description in LOF, taken from~\cite{source}]{The convergence of the operators $S_{n}^*(g;x)$ to the function $g(x)$.}
    \label{F4}
\end{figure}
\begin{table}[ht]
\begin{tabular}{|c|c|c|c|c|c|c|c|}
\hline 
$x\downarrow$, $u_n=n^{\frac{3}{2}}\to$ & at n=10 & at n=50 & at n=100 & at n=200 & at n=250 & at n=500 & at n=1000 \\ 
\hline 
0.1 & 0.0282979 & 0.0018008 & 0.000622967 & 0.000218562 & 0.000156203 & 0.0000551185 & 0.0000194739 \\ 
\hline 
0.5 & 0.574288 & 0.0394044 & 0.0137104 & 0.0048201 & 0.00344596 & 0.0012166 & 0.000429916 \\ 
\hline 
0.9 & 3.79761 & 0.256632 & 0.0892291 & 0.0313623 & 0.0224205 & 0.00791509 & 0.00279694 \\ 
\hline 
1.0 & 5.74555 & 0.386191 & 0.134238 & 0.0471774 & 0.033726 & 0.011906 & 0.00420715 \\ 
\hline 
1.5 & 37.6466 & 2.45554 & 0.852162 & 0.299321 & 0.213959 & 0.0755213 & 0.0266852 \\ 
\hline 
2.0 & 201.92 & 12.7484 & 4.4164 & 1.55031 & 1.10808 & 0.391058 & 0.138172 \\ 
\hline 
2.5 & 960.667 & 58.6418 & 20.2783 & 7.11386 & 5.08411 & 1.79398 & 0.633827 \\ 
\hline 
\end{tabular} 
\caption{Convergence estimations of the operators $B_n^*(g;x)$ to the function $g(x).$}\label{t2}
\end{table}
\end{example}

\pagebreak
\begin{example}
Further for the function $g(x)=x^2e^{2x}$(black), for all $0\leq x\leq 2.5$, one can see the error estimations of the operators (\ref{rb}). Here, we consider $u_n=n^{2}$ and choosing the value of $n=10, 50, 100, 200, 250, 500, 1000$, the curves of the operators (\ref{rb}) represent Magenta color for all values of $n^2$ of the operators. Hence, we can observe the error estimations by Figure \ref{F5} as well Table \ref{t3} at different points of $x$. By observing, we can see, the function's curve almost overlapped by the curves of the operators.

\begin{figure}[h!]
    \centering 
    \includegraphics[width=.42\textwidth]{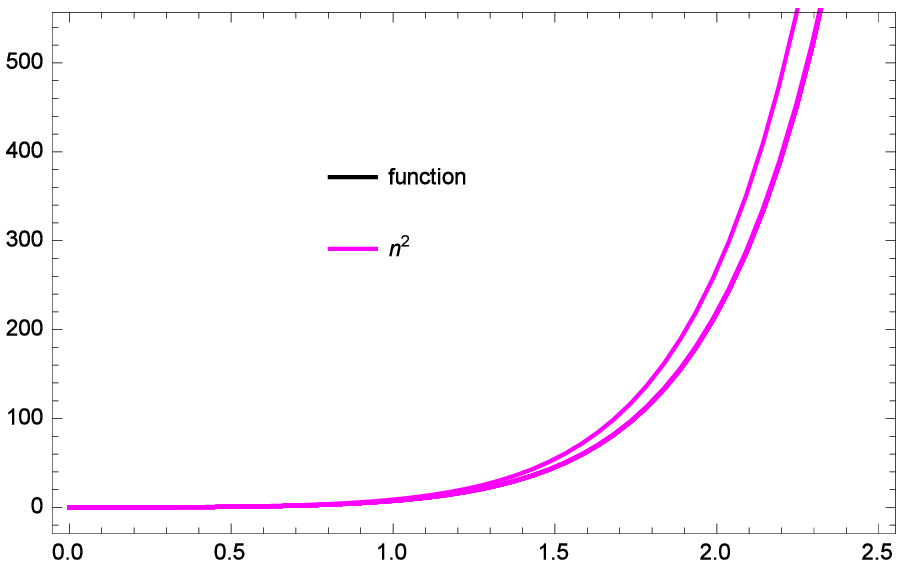}   
    \caption[Description in LOF, taken from~\cite{source}]{The convergence of the operators $S_{n}^*(g;x)$ to the function $g(x)$.}
    \label{F5}
\end{figure}
\begin{table}[ht]
\begin{tabular}{|c|c|c|c|c|c|c|c|}
\hline 
 $u_n=n^2\to$ & at n=10 & at n=50 & at n=100 & at n=200 & at n=250 & at n=500 & at n=1000 \\ 
\hline 
$x$=0.1 & 0.0069326 & 0.000247412 & 0.0000616321 & 0.0000153943 & 9.85127$\times$10$^{-6}$ & 2.462477$\times$10$^{-6}$ & 6.15594$\times$10$^{-7}$ \\ 
\hline 
$x$=0.5 & 0.148479 & 0.00545553 & 0.00136032 & 0.000339859 & 0.000217493 & 0.0000543675 & 0.0000135915 \\ 
\hline 
$x$=0.9 & 0.969982 & 0.0354973 & 0.00885019 & 0.00221104 & 0.00141495 & 0.000353699 & 0.0000884224 \\ 
\hline 
$x$=1.0 & 1.46137 & 0.053398 & 0.0133126 & 0.00332584 & 0.00212836 & 0.000532032 & 0.000133004 \\ 
\hline 
$x$=1.5 & 9.3538 & 0.338802 & 0.0844444 & 0.0210951 & 0.0134997 & 0.00337451 & 0.000843601 \\ 
\hline 
$x$=2.0 & 48.9145 & 1.75487 & 0.437267 & 0.109226 & 0.069898 & 0.0174722 & 0.0043679 \\ 
\hline 
$x$=2.5 & 226.689 & 8.0529 & 2.00598 & 0.501045 & 0.320634 & 0.080147 & 0.020036 \\ 
\hline 
\end{tabular} 
\caption{Convergence estimations of the operators $B_n^*(g;x)$ to the function $g(x)$}\label{t3}
\end{table}
\end{example}
%\pagebreak
\begin{example}
At the same time for the same function $g(x)=x^2e^{2x}$, $0\leq x\leq 2.5$, we can observe by the given Figure \ref{F6} that the accuracy of the convergence for the operators (\ref{rb}) is better when $u_n=n^2$ is taken rather than when we choose the sequences $u_n=n$ and $u_n=n^{\frac{3}{2}}$ for the same operators (\ref{rb}).
\begin{figure}[h!]
    \centering 
    \includegraphics[width=.42\textwidth]{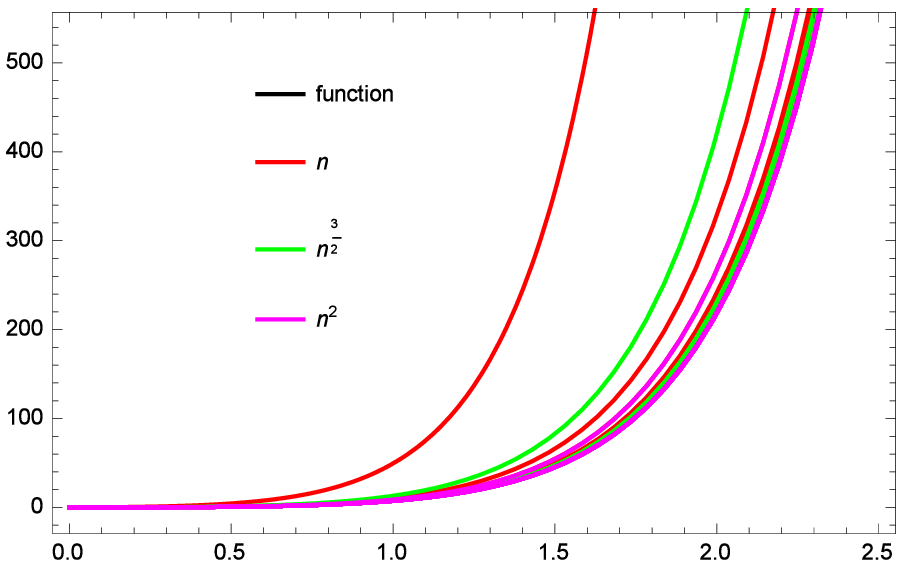}   
    \caption[Description in LOF, taken from~\cite{source}]{The convergence of the operators $S_{n}^*(g;x)$ to the function $g(x)$.}
    \label{F6}
\end{figure}
\end{example}
\pagebreak
\textbf{Concluding Remark:} After observing by all the Figures (\ref{F1},\ref{F2},\ref{F3},\ref{F4},\ref{F5},\ref{F6}) and Tables (\ref{t1}, \ref{t2}, \ref{t3}), we can conclude that the better approximation can be obtained by choosing the appropriate sequence for the operators (\ref{rb}) and in addition will get good approximation for the large value of $n$ of the  positive and real sequence for the operators.\\

\textbf{Conclusion:}
The approximation properties have been determined for the functions belonging to different spaces and moreover the rate of the convergence of the operators  has been discussed. Not in theoretical sense to support of our approximation results but also using graphical means, we presented the graphical analysis.

\end{document}